\newtheorem{proposition}{Proposition}
\newtheorem{lemma}{Lemma}
\theoremstyle{definition}
\newtheorem{theorem}{Theorem}
\newtheorem*{theoremA}{Theorem A}
\newtheorem{example}{Example}
\newtheorem{problem}{Problem}
\theoremstyle{remark}
\newtheorem{remark}{Remark}
\newcommand{\R}{\mathbb{R}}
\newcommand{\eps}{\varepsilon}
\title{Pinned Distances and Density Theorems in $\R^d$}
\author{Chenjian Wang}
\date{}
\begin{document}

\begin{abstract}
We study a pinned variant of Bourgain’s theorem \cite{Bourgain1986}, concerning the occurrence of affine copies of $k$-point patterns in $\R^d$.  Focusing on the case $k=2$, which corresponds to pinned distances, we show that the classical conclusion does not extend to the pinned setting: there exist sets of positive upper density in $\R^d,~d\geq 2$ such that no single pinned point determines all sufficiently large distances. However, we establish a weaker quantitative result: for every point $x$ in such a set, the pinned distance set at $x$ has (one-dimensional) positive upper density. We also construct an example demonstrating the sharpness of this bound. These findings highlight a structural distinction between global and pinned configurations.

\end{abstract} 

 \maketitle

\let\thefootnote\relax
\footnotetext{
Key Words: Pinned distance set, Pattern recognition.

Mathematical subject classification: Primary: 11B30.} 

\section{Introduction}
Let us define the \textit{upper density} of a Lebesgue measurable set $A\subseteq \R^d$ as
\begin{equation*}
    \delta(A):=\limsup_{R\to \infty} \frac{|B(0,R)\bigcap A|}{R^d}
\end{equation*}
where $|\cdot|$ denotes the Lebesgue measure in $\R^d$. For computational reason, we replace by $R^d$ the $|B(0,R)|$ in the denominator in the standard definition, the difference is not essential.

In 1986, Bourgain proved a  Szemer\'edi-type theorem for simplices. We state its planar case.

\begin{theoremA}[Bourgain \cite{Bourgain1986}]\label{Bourgain's theorem}
    Suppose set $A\subseteq \R^2$ has positive upper density.
   Then there exists a number $l>0$ such that $(l,\infty)\subseteq D(A)$, where $D(A)$ is the distance set of $A$, defined as 
   \begin{equation*}
       D(A):=\{|x-y|:x,y\in A\}.
   \end{equation*}

\end{theoremA}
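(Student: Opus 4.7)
Let $\delta = \delta(A) > 0$. I aim to produce, for every sufficiently large $t$, a pair $x, y \in A$ with $|x - y| = t$. Along a sequence $R_k \to \infty$ witnessing the upper density, we have $|A \cap B(0, R_k)| \geq (\delta/2) R_k^2$. Fix such an $R = R_k$ and set $f = 1_{A \cap B(0, R)}$. A pair at distance exactly $t$ inside $B(0, R)$ corresponds to
\[
I(t) := \int_{\R^2} f(x)\, (f * \sigma_t)(x)\, dx > 0,
\]
where $\sigma_t$ is arc-length measure on $\{z\in\R^2 : |z| = t\}$. It suffices to prove $I(t) > 0$ for every $t \in (l, R/2)$ with a threshold $l = l(\delta)$ independent of $R$, and then let $R \to \infty$.

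\textbf{Fourier reduction.} Plancherel gives
\[
I(t) = \int_{\R^2} |\hat f(\xi)|^2 \hat \sigma_t(\xi)\, d\xi, \qquad \hat\sigma_t(\xi) = 2\pi t\, J_0(2\pi t|\xi|),
\]
so the sign of $I(t)$ is controlled by the Bessel kernel $J_0$, which equals $1$ at the origin and decays like $|2\pi t \xi|^{-1/2}$ with oscillation. The plan is to isolate a positive main term from the low-frequency part of $|\hat f|^2$ and dispatch the oscillatory tail via Bessel decay.

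\textbf{Main term via density.} Split $f = c \cdot 1_{B(0, R)} + F$ with $c = |A \cap B(0, R)|/|B(0, R)|$, so that $\int F = 0$ and hence $\hat F(0) = 0$. Expanding $I(t)$ produces a purely-ball piece
\[
c^2 \int 1_{B(0, R)}\,(1_{B(0, R)} * \sigma_t)\, dx \gtrsim \delta^2\, t\, R^2 \qquad (t \leq R/2),
\]
which serves as the main term; geometrically, it counts chord-endpoints on the ball weighted by $\sigma_t$. The two cross and quadratic-$F$ contributions should be of smaller order. For these, I would combine the vanishing of $\hat F$ at the origin (killing the low-frequency contribution) with the Bessel decay $|\hat\sigma_t(\xi)| \lesssim \sqrt{t/|\xi|}$ and the Plancherel bound $\|F\|_2^2 \leq |A \cap B(0, R)| \leq R^2$ to bound the high-frequency parts.

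\textbf{Main obstacle.} The real difficulty is that the trivial Plancherel bound on $F$ alone does not beat $\delta^2 t R^2$: the peak value $\|\hat\sigma_t\|_\infty = 2\pi t$ recovers a full power of $t$, so the Bessel oscillation must be used quantitatively. I anticipate needing a dyadic decomposition in frequency, where on annuli $|\xi| \sim 2^j$ one runs a $T T^*$/stationary-phase bound against $|\hat F|^2$, combined with a multiscale pigeonhole over the density-witnessing sequence $R_k$ to squeeze out enough cancellation from $\hat F$. The size of the threshold $l = l(\delta)$ emerges from this balance, and this step is the technical heart of the argument; everything preceding it is essentially setup.
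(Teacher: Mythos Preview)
The paper does not prove Theorem~A. It is quoted as a known result of Bourgain~\cite{Bourgain1986} and serves purely as background motivation; the paper's own contributions concern the \emph{pinned} analogue (Proposition~\ref{counterexample}, Theorem~\ref{prop1}, Proposition~\ref{Prop:Sharpness}), and nowhere in the text is an argument for Theorem~A given or even sketched. So there is nothing to compare your proposal against.

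On the proposal itself: your outline is in the spirit of Bourgain's original Fourier-analytic proof, and you have correctly located the genuine difficulty. The decomposition $f = c\,1_{B(0,R)} + F$ and the main-term computation are fine, but as you yourself flag, the ``main obstacle'' paragraph is where the entire content lives and you have not resolved it. The trivial bound $\|F\|_2^2 \le R^2$ together with Bessel decay gives only $|I_F(t)| \lesssim t^{1/2} R^{3/2}$ after a crude frequency split, which does not beat $\delta^2 t R^2$ for $t$ in the range $(l, R/2)$ unless $l$ is allowed to depend on $R$. What is actually needed is a \emph{uniformity} or \emph{regularity} input: a pigeonhole over scales showing that at \emph{some} scale $R$ the mean-zero part $F$ has anomalously small $L^2$ mass relative to $\delta R^2$ (equivalently, that the local density is nearly constant). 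Without that step your scheme cannot close, and the vague reference to ``multiscale pigeonhole over the density-witnessing sequence $R_k$'' is precisely the missing idea rather than a plan for supplying it.
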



Inspired by the work on pinned distance set (see equation \eqref{pinneddisset} for its definition) problems \cite{LiuFormular,GIOW5/4}, we study a pinned analog of Bourgain’s theorem. Specifically, we consider whether a single point 
$x$ in $A$ can serve as the base for observing all large distances. More precisely,
\begin{problem}\label{problem1}
    Suppose $A\subseteq \R^d, d\geq 2$ has positive upper density. Is there any $x\in A$ and a number $l(x)>0$ such that for all $l'>l(x)$,  there exists $ y\in A \text{ such that } |x-y|=l'$?
\end{problem}

This is a natural way to propose the pinned version of Bourgain's theorem. In the study of the Falconer's distance set problem in geometric measure theory, people asked the following question. 
\begin{problem}[Falconer's distance set problem, pinned version]\label{PinnedFalconer}
    Assume $A\subseteq \R^d,d\geq 2$ is a Borel set and $\dim_ H(A)>\frac{d}{2}$. Then is there any $x\in A$ such that the pinned distance set 
    \begin{equation}\label{pinneddisset}
        D_x(A):=\{|x-y|:y\in A\}
    \end{equation}
    has positive Lebesgue measure? Here $\dim_ H(A)$ is the Hausdorff dimension of $A$.
\end{problem}
Comparing these two problems, in both cases, we have two ``original" sets with certain ``largeness" conditions: In Problem \ref{problem1}, the upper density is positive, whereas in Problem \ref{PinnedFalconer}, it is $\dim_ H(A)>\frac{d}{2}$. The conclusions are both about the ``largeness" of certain ``induced" sets of the original sets, which is pinned distance set in both cases. However, the manners in which we quantify largeness are different. In Problem \ref{problem1}, we use $(l(x),\infty)\subseteq D_x(A)$, whereas in Problem \ref{PinnedFalconer}, we use $|D_x(A)|>0$.

Returning to Problem \ref{problem1}, our first result provides a negative answer to it.
\begin{proposition}\label{counterexample}
     For $d\geq 2$, there exists a set $ E\subseteq \R^d$ with $\delta(E)>0$ such that for all $ x\in E$ and all $ l>0$, there exists $ l'>l$ such that
    \begin{equation*}
        |x-y|\neq l',~ \forall y\in E.
    \end{equation*}
\end{proposition}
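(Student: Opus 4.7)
The plan is to construct $E$ explicitly as a union of nested concentric spherical annuli with dyadic spacing, exploiting the fact that the triangle inequality around any point $x \in E$ inherits the annular structure up to a perturbation of size $|x|$, which the spacing comfortably absorbs.

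Concretely, I would set
\begin{equation*}
E := \bigcup_{n=0}^{\infty} A_n, \qquad A_n := \{y \in \R^d : 2^{2n} \leq |y| \leq 2^{2n+1}\}.
\end{equation*}
To check $\delta(E) > 0$, I would test the definition along the subsequence $R = 2^{2N+1}$: the intersection $B(0,R) \cap E$ is the disjoint union $A_0 \cup \cdots \cup A_N$, whose volumes form a geometric series dominated by the last term $|A_N| \asymp 2^{2Nd}$. Dividing by $R^d = 2^{(2N+1)d}$ yields a limit $\delta(E) \geq c_d > 0$.

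For the gap property, I would fix $x \in E$, say $x \in A_{n_0}$, so $|x| \leq 2^{2n_0+1}$. Since the sphere $S(0, r)$ is connected for $d \geq 2$, the set $\{|x - y| : y \in S(0, r)\}$ is the closed interval $[\bigl|r - |x|\bigr|, r + |x|]$. Unioning over $r \in [2^{2m}, 2^{2m+1}]$ and using $|x| < 2^{2m}$ for every $m \geq n_0 + 1$, one obtains
\begin{equation*}
\{|x - y| : y \in A_m\} = [2^{2m} - |x|, 2^{2m+1} + |x|].
\end{equation*}
The intervals corresponding to levels $m$ and $m+1$ are separated by an open gap $(2^{2m+1} + |x|, 2^{2m+2} - |x|)$, which is nonempty precisely because $|x| < 2^{2m}$. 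The distances from $x$ to points in $A_k$ with $k \leq n_0$ are all bounded by $2 \cdot 2^{2n_0+1}$, so they fall below these gap intervals once $m$ is sufficiently large. Consequently $D_x(E)$ misses some distance in each sufficiently far-out dyadic window, which is exactly the negation required.

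The main obstacle is identifying the right geometric structure for $E$. Product-type constructions such as $F \times \R^{d-1}$ immediately fail, because the $\R^{d-1}$ factor generates every positive distance from every pinned point; sparser random or number-theoretic constructions seem difficult to reconcile with positive upper density. The crucial insight is to make $E$ rotationally symmetric about a single center, so that the gaps trivially present in $D_0(E)$ are transferred to every $D_x(E)$ with only an additive perturbation of size $|x|$, which the dyadic spacing $2^{2n}$ comfortably dominates for large $n$.
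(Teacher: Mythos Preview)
Your proof is correct and, in several respects, cleaner than the paper's own. The paper proves Proposition~\ref{counterexample} via Example~\ref{NoBourgainTypeResult} (``rapidly inflating boxes''): a sequence of cubes $Q_i$ of rapidly increasing side length $\ell_i$ placed along the positive $x^1$-axis, separated by even larger gaps $R_i$. The argument then requires careful bookkeeping of the cube geometry (diagonals, corner-to-corner distances) to extract the gap structure in $D_x(E)$.

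Your construction instead uses concentric dyadically-spaced annuli, which buys you two simplifications. First, rotational symmetry reduces the computation of $D_x(A_m)$ to a one-line triangle inequality, whereas the cube construction forces the paper into coordinate estimates such as~\eqref{est1}. Second, you get away with plain dyadic growth $2^{2n}$ rather than the super-geometric condition $R_{i+1} > \frac{100d}{\eps}\sum_{j\le i} R_j$; the reason is that your annuli are centered at the origin, so the perturbation introduced by the pin $x$ is exactly $|x|$, which $2^{2m}$ dominates immediately. The paper's boxes are off-center, which is why it needs the extra slack. It is worth noting that the paper does use concentric annuli later, in Example~\ref{sharpexample}, to prove the sharpness Proposition~\ref{Prop:Sharpness}; your construction is essentially that example with thick annuli, repurposed to establish Proposition~\ref{counterexample} as well. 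So one could say you have found a single construction that serves both propositions.
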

The proof is a construction of such set $E$ showing that the pinned analog of Bourgain’s result fails in the strongest form. It will be presented in {Example} \ref{NoBourgainTypeResult}.

Proposition \ref{counterexample} tells us that it is inappropriate to quantify the sizes of the pinned distance sets by ``containing $(l(x),\infty)$". This conclusion turns out to be too strong to be true. Example \ref{NoBourgainTypeResult} motivates the search for weaker guarantees on the size of the pinned distance set. 
Therefore, our second result answers this quantitatively.

\begin{theorem} \label{prop1}
     Assume $d\geq 2$ and $A\subseteq \R^d$ with $\delta(A)>0$. Then for all $x\in \R^d,$
    $$ \delta(D_x(A))=\limsup_{R\to \infty}\frac{|D_x(A)\bigcap (-R,R)|}{R}\geq \frac{\delta(A)}{2|\mathbb S^{d-1}|},$$
    where $D_x(A)$ is  defined in \eqref{pinneddisset} and $|\mathbb S^{d-1}|$ is the area of the $(d-1)$-dimensional unit sphere.
\end{theorem}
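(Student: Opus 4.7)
The plan is to pass from the $d$-dimensional density of $A$ to the $1$-dimensional density of $D_x(A)$ via a change of variables into polar coordinates centered at $x$. The identity I will start from is
\begin{equation*}
|A \cap B(x,R)| = \int_0^R f_x(r) \, r^{d-1} \, dr, \qquad f_x(r) := \int_{\mathbb{S}^{d-1}} \mathbf{1}_A(x + r\omega) \, d\sigma(\omega),
\end{equation*}
together with the two observations that $f_x(r) \leq |\mathbb{S}^{d-1}|$ pointwise and that $f_x(r) > 0$ forces $r \in D_x(A)$.

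From here my first step is to bound $r^{d-1}$ by $R^{d-1}$ on $[0,R]$ and replace $f_x$ by its pointwise ceiling $|\mathbb{S}^{d-1}|$, which yields
\begin{equation*}
|A \cap B(x,R)| \leq |\mathbb{S}^{d-1}| \, R^{d-1} \, \bigl|\{r \in [0,R) : f_x(r) > 0\}\bigr| \leq |\mathbb{S}^{d-1}| \, R^{d-1} \, |D_x(A) \cap [0,R)|.
\end{equation*}
Since $D_x(A) \subseteq [0,\infty)$, the last factor equals $|D_x(A) \cap (-R,R)|$, so rearranging produces the pointwise inequality
\begin{equation*}
\frac{|D_x(A) \cap (-R,R)|}{R} \geq \frac{|A \cap B(x,R)|}{|\mathbb{S}^{d-1}| \, R^d}.
\end{equation*}

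Next I will take $\limsup_{R \to \infty}$ on both sides. The left-hand side is by definition $\delta(D_x(A))$, and for the right-hand side I need $\limsup_{R \to \infty} |A \cap B(x,R)|/R^d \geq \delta(A)$, which I will deduce from the inclusion $B(0, R - |x|) \subseteq B(x,R)$ valid for $R \geq |x|$, together with the elementary fact that $(R - |x|)^d / R^d \to 1$, so translating the center does not affect the upper density. This chain gives $\delta(D_x(A)) \geq \delta(A)/|\mathbb{S}^{d-1}|$, which is in fact slightly stronger than the stated $\delta(A)/(2|\mathbb{S}^{d-1}|)$; the factor of $2$ is harmless slack.

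The only technical wrinkle I anticipate is that $D_x(A)$, being the image of a measurable set under the continuous map $y \mapsto |x-y|$, is not automatically Borel. I will sidestep this by carrying out the whole argument in terms of the manifestly measurable level set $\{r \in [0,R) : f_x(r) > 0\}$, which sits inside $D_x(A) \cap [0,R)$; the lower bound on its Lebesgue measure transfers to the outer measure of $D_x(A) \cap (-R,R)$, so the statement remains valid under the natural Lebesgue interpretation. Beyond this point the proof is essentially a one-line co-area computation plus an elementary translation remark about the upper density, so I do not foresee a genuine obstacle.
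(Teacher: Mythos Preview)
Your argument is correct and follows essentially the same route as the paper: pass to spherical coordinates, bound the angular integral by $|\mathbb{S}^{d-1}|$, observe that its support lies in $D_x(A)$, and use translation invariance of the upper density (the paper splits this into a separate lemma and reduces to $x=0$, but the content is identical). Your pointwise inequality, valid for every $R$, in fact delivers the sharper constant $\delta(A)/|\mathbb{S}^{d-1}|$ without the factor $2$---the paper loses that factor only because it first extracts a subsequence with $|A\cap B(0,R_i)|\geq \tfrac{\delta(A)}{2}R_i^d$ rather than taking $\limsup$ directly---and your care about the measurability of $D_x(A)$ via the level set of $f_x$ addresses a point the paper leaves implicit.
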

The theorem affirms that although the pinned distance set $D_x(A)$ does not contain certain interval $(l(x),\infty)$; its upper density, as a subset of $\R$, must be positive. 

The proof of Theorem \ref{prop1} is elementary. However, the result is sharp in the 
following sense. 
\begin{proposition}[sharpness of the Theorem \ref{prop1}]\label{Prop:Sharpness}
   For $d\geq 2$, there are a constant $C_d$ and a set $E$ with $\delta(E)>0$, such that for all $x\in E,\delta(D_x(E))\leq C_d\delta(E)$.
\end{proposition}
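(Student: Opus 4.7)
The plan is to construct $E$ as a union of thin spherical annuli centered at the origin with exponentially growing radii. Fix a small parameter $\epsilon \in (0,1)$, set
\[
F := \bigcup_{k = 0}^{\infty} \bigl[\,2^{k},\, (1+\epsilon)2^{k}\,\bigr] \subseteq [0,\infty), \qquad E := \{\, y \in \R^{d} : |y| \in F \,\}.
\]
The intuition is that although $E$ has positive upper density, it is ``effectively one-dimensional'' from the viewpoint of pinned distances: by rotational symmetry about the origin and continuity of $y \mapsto |x - y|$ on each sphere, the distances from any $x \in E$ to a single annulus form just one interval.

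A shell-volume computation in polar coordinates, evaluated along the sequence $R_{n} = 2^{n}$, gives
\[
|E \cap B(0, R_{n})| = |\mathbb{S}^{d-1}| \int_{F \cap [0, R_{n}]} r^{d-1}\, dr \geq c_{d}\, \epsilon\, R_{n}^{d},
\]
by applying Bernoulli's inequality $(1+\epsilon)^{d} \geq 1 + d\epsilon$ to each shell and summing the geometric series in $2^{kd}$; hence $\delta(E) \geq c_{d}\epsilon$ for some dimensional constant $c_{d} > 0$.

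For the matching upper bound on $\delta(D_{x}(E))$, fix $x \in E$ and let $r_{0} := |x| \in [2^{k_{0}}, (1+\epsilon)2^{k_{0}}]$. The set of distances from $x$ to the $k$-th annulus is the single interval
\[
I_{k} := \bigcup_{\rho \in [2^{k},\,(1+\epsilon)2^{k}]} \bigl[\,|r_{0}-\rho|,\, r_{0}+\rho\,\bigr].
\]
A short case analysis on the sign of $\rho - r_{0}$ yields: for $k > k_{0}$, $I_{k} = [\,2^{k}-r_{0},\,(1+\epsilon)2^{k}+r_{0}\,]$, of length $\epsilon 2^{k} + 2 r_{0}$; for $k < k_{0}$, $I_{k} = [\,r_{0} - (1+\epsilon)2^{k},\, r_{0} + (1+\epsilon)2^{k}\,]$, centered at $r_{0}$; and $I_{k_{0}} = [0,\,(1+\epsilon)2^{k_{0}} + r_{0}]$. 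Summing lengths, the long-scale contributions ($k > k_{0}$ with $2^{k} \leq R$) total at most $2\epsilon R + O(r_{0} \log R)$ by the geometric series, while the short-scale intervals ($k \leq k_{0}$) are nested around $r_{0}$ and contribute total length $O(r_{0})$. Dividing by $R$ and letting $R \to \infty$ yields $\delta(D_{x}(E)) \leq 2\epsilon$ uniformly in $x$; combined with $\delta(E) \geq c_{d}\epsilon$, this gives $\delta(D_{x}(E)) \leq C_{d}\delta(E)$ with $C_{d} := 2/c_{d}$.

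The main obstacle is bookkeeping: one must verify the three cases describing $I_{k}$, observe that the short-scale intervals are nested around $r_{0}$ and hence make a negligible contribution to the one-dimensional density, and sum the geometric series in $2^{k}$ for the long-scale contributions. Beyond this elementary case analysis, no further ideas are needed.
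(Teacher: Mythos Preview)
Your proposal is correct and uses essentially the same construction as the paper: a union of thin spherical annuli $\{y:|y|\in[R_k,(1+\epsilon)R_k]\}$ centered at the origin. The only real difference is the choice of radii. You take the geometric sequence $R_k=2^k$, whereas the paper uses a super-geometrically increasing sequence $R_{i+1}>\frac{100d}{\epsilon}\sum_{j\le i}R_j$ (the same sequence as in Example~\ref{NoBourgainTypeResult}). With rapid growth the paper can simply say that for $i$ large enough, $|x|$ is negligible compared to $R_i$, so $D_x(S_i)\subseteq[(1-\tfrac{\epsilon}{2})R_i,(1+\tfrac{3\epsilon}{2})R_i]$, and the largest interval dominates all smaller ones; no geometric series is needed. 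Your geometric radii force you to track the $2r_0$ correction in each $|I_k|=\epsilon 2^k+2r_0$, sum a geometric series, and observe that the resulting $O(r_0\log R)$ term is $o(R)$; this is a bit more bookkeeping but entirely elementary, and it has the mild advantage of using a concrete sequence rather than an abstract rapidly increasing one. Either way the conclusion $\delta(D_x(E))\le C_d\,\epsilon\le C_d'\,\delta(E)$ follows, and both constructions give a one-parameter family (in $\epsilon$) showing that the exponent of $\delta(A)$ in Theorem~\ref{prop1} cannot be improved.
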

This example shows that in Theorem \ref{prop1}, we cannot expect a better exponent of $\delta(A)$ such as $\delta(A)^t$ for $t<1$. We make this more precise in {Example} \ref{sharpexample}.

In the study of Falconer's distance set problem, as they are applied to the general distance set, Fourier-analytic techniques work equally well and often imply that large sets yield large pinned distance sets (see \cite{LiuFormular,GIOW5/4}).   Therefore, one may assume that there is a certain ``equivalency" between the general distance set and the pinned distance set in such type of problems. However, our construction shows that this implication can fail in the context of Bourgain-type density results. This highlights a subtle geometric divergence between pinned and unpinned frameworks as well as that between Hausdorff dimension and upper density. 

The methods used to address these two types of problems are quite different. For example, in the planar Falconer's problem, the multiscale analysis tools such as induction-on-scales, wave packet decomposition, and decoupling are heavily used \cite{GIOW5/4}. While in the upper density setting, multilinear analysis tools such as multilinear singular integrals and Gowers' norm are applied, see \cite{Bourgain1986,CMP,lyallmagyar}. It is interesting to interpret these differences and make the heuristics more precise.

The method we used in the case of two point pattern (distance) can be generalized to study some more complicated patterns in all dimensions by combining other ingredients from combinatorics.

The structure of the note is as follows: In Section \ref{sec2conterexample}, we introduce the counterexample which is also the proof of Proposition \ref{counterexample}. In Section \ref{sec3PositiveResult}, we present the proof of the positive result, Theorem \ref{prop1}. In Section \ref{sec4Sharpness}, we prove that the result of Theorem \ref{prop1} is sharp, which is Proposition \ref{sharpexample}.
\section{{Proof of Proposition \ref{counterexample}: A counterexample}}\label{sec2conterexample}
\begin{example}[rapidly inflating boxes]\label{NoBourgainTypeResult}
     For $d\geq 2$ and $\eps=\frac{1}{10^5d}$, we define the following rapidly increasing sequences. For $i=1,2,...$,
     \begin{align*}
 R_1=1, ~R_{i+1}&>\frac{100d}{\eps}\sum_{1\leq j\leq i}R_j,\\
   \ell_0=1,~ \ell_i&=\eps R_{i+1},\\
        x_i^1&=\sum_{1\leq j\leq i}(\ell_{j-1}+R_j).
     \end{align*}
    We also define set $E\subseteq  [0,\infty)^d$ as 
     \begin{equation*}
    E=\bigsqcup_{i=1}^\infty Q_i:=\bigsqcup_{i=1}^\infty\left((x_i^1,0,...,0)+Q_i'\right)
     \end{equation*}
     where cube $Q_i'=\{(x^1,...,x^d)\in\R^d: 0\leq x^j\leq \ell_i, 1\leq j\leq d\}$. The picture of the planar case is shown in Figure \ref{E in Planar case}. Roughly speaking, it is a series of inflating boxes. Each fixed box $Q_i$ has the following properties:
     \begin{itemize}
         \item Its side length $\ell_i$ is far greater than its distance to the origin, which is $x^1_i$. 
         
         This is because, by the definitions of the sequences, 
         \begin{equation*}
             x_i^1=\sum_{1\leq j\leq i}(\ell_{j-1}+R_j)=2+(1+\eps)\sum_{2\leq j\leq i}R_j<\frac{\eps}{50d}\frac{100d}{\eps}\sum_{1\leq j\leq i}R_j<\frac{\eps}{50d}R_{i+1}=\frac{\ell_i}{50d}.
         \end{equation*}
         \item The area of the $Q_i$, which is $\ell_i^d$, accounts for most of the area of $(0,x_i^1+\ell_i)^d$. 

         This is a corollary of the first item. By definition, the proportion of the areas is
         \begin{equation*}
             \frac{\ell_i^d}{(x_i^1+\ell_i)^d}=(\frac{\ell_i}{x_i^1+\ell_i})^d=\Big(\frac{1}{\frac{x_i^1}{\ell_i}+1}\Big)^d\approx 1\text{, since $\ell_i\gg x_i^1$}.
         \end{equation*}
     \end{itemize}
     \begin{figure}[h]
         \centering

\tikzset{every picture/.style={line width=0.75pt}} 

\begin{tikzpicture}[x=0.75pt,y=0.75pt,yscale=-1,xscale=1]

\draw    (82,210.25) -- (594,212.24) ;
\draw [shift={(596,212.25)}, rotate = 180.22] [color={rgb, 255:red, 0; green, 0; blue, 0 }  ][line width=0.75]    (10.93,-3.29) .. controls (6.95,-1.4) and (3.31,-0.3) .. (0,0) .. controls (3.31,0.3) and (6.95,1.4) .. (10.93,3.29)   ;
\draw    (82,210.25) -- (82,106.25) ;
\draw [shift={(82,104.25)}, rotate = 90] [color={rgb, 255:red, 0; green, 0; blue, 0 }  ][line width=0.75]    (10.93,-3.29) .. controls (6.95,-1.4) and (3.31,-0.3) .. (0,0) .. controls (3.31,0.3) and (6.95,1.4) .. (10.93,3.29)   ;
\draw  [color={rgb, 255:red, 208; green, 2; blue, 27 }  ,draw opacity=1 ] (97.88,174.17) -- (133.8,174.3) -- (133.68,210.23) -- (97.75,210.1) -- cycle ;
\draw  [color={rgb, 255:red, 208; green, 2; blue, 27 }  ,draw opacity=1 ] (233.97,132.95) -- (312.12,133.23) -- (311.85,211.38) -- (233.69,211.11) -- cycle ;
\draw  [color={rgb, 255:red, 208; green, 2; blue, 27 }  ,draw opacity=1 ] (491.31,106.83) -- (596.37,107.19) -- (596,212.25) -- (490.94,211.88) -- cycle ;
\draw    (90,204.25) .. controls (78.24,201.31) and (88.57,199.33) .. (62.63,197.37) ;
\draw [shift={(61,197.25)}, rotate = 4.09] [color={rgb, 255:red, 0; green, 0; blue, 0 }  ][line width=0.75]    (10.93,-3.29) .. controls (6.95,-1.4) and (3.31,-0.3) .. (0,0) .. controls (3.31,0.3) and (6.95,1.4) .. (10.93,3.29)   ;

\draw (41,196.65) node [anchor=north west][inner sep=0.75pt]    {$R_{1}$};
\draw (176,214.4) node [anchor=north west][inner sep=0.75pt]    {$R_{2}$};
\draw (382,218.4) node [anchor=north west][inner sep=0.75pt]    {$R_{3}$};
\draw (110,150.4) node [anchor=north west][inner sep=0.75pt]    {$\ell _{1}$};
\draw (264,111.4) node [anchor=north west][inner sep=0.75pt]    {$\ell _{2}$};
\draw (526,87.4) node [anchor=north west][inner sep=0.75pt]    {$\ell _{3}$};
\draw (91.75,215.5) node [anchor=north west][inner sep=0.75pt]    {$x_{1}^{1}$};
\draw (226,214.4) node [anchor=north west][inner sep=0.75pt]    {$x_{2}^{1}$};
\draw (484,222.4) node [anchor=north west][inner sep=0.75pt]    {$x_{3}^{1}$};
\draw (327,143.4) node [anchor=north west][inner sep=0.75pt]    {$Q_{2}$};
\draw (453,106.4) node [anchor=north west][inner sep=0.75pt]    {$Q_{3}$};
\draw (147,167.4) node [anchor=north west][inner sep=0.75pt]    {$Q_{1}$};

\end{tikzpicture}

         \caption{$E$ in the planar case}
         \label{E in Planar case}
     \end{figure}
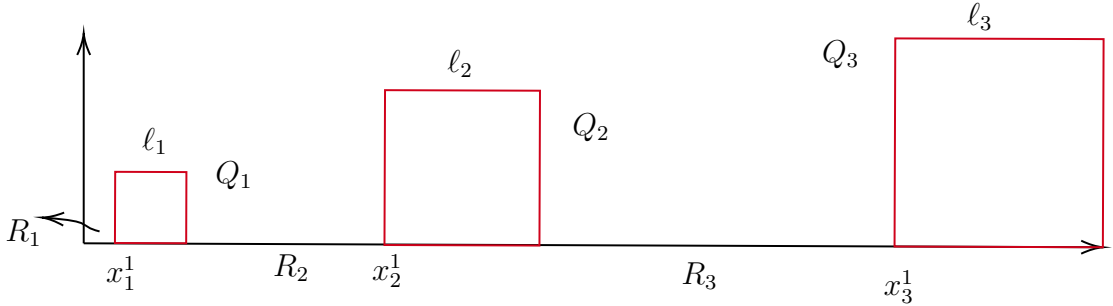  
With this in mind, readers can convince themselves that 
    \begin{equation}\label{upperdensityforEx1}
       \delta(E)= \limsup_{R\to \infty}\frac{|E\bigcap B(0,R)|}{R^d}\approx_d 1.
    \end{equation}
   If $R$ goes to infinity along each lower-right vertex, the second item tells us that $Q_i$ is almost the entire $(0,x_i^1+\ell_i)^d$, whereas $(0,x_i^1+\ell_i)^d$ is roughly the $\frac{1}{2^d}$ part of our test ball $B(0,x_i^1+\ell_i)$.
   
   Taking equation \eqref{upperdensityforEx1} for granted for now, we claim 
    \begin{lemma}
         Fix any point $x\in E$ and all $\ell>0$, there exists $\ell'>\ell$ such that for all $y\in E, |x-y|\neq \ell'.$ 
    \end{lemma}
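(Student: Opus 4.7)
The plan is to fix $x\in E$, let $k$ be the (unique) index with $x\in Q_k$, and decompose
\[
D_x(E)=D_x\Big(\bigsqcup_{j\le k}Q_j\Big)\ \cup\ \bigcup_{i>k}I_i,\qquad I_i:=\{|y-x|:y\in Q_i\}.
\]
The first piece is bounded; I will show that for $i>k$ each $I_i$ is an interval $[A_i,B_i]$ with $B_i\ll A_{i+1}$, so consecutive intervals have enormous gaps. Any $\ell'$ chosen in such a gap, far enough out, will witness the lemma.

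For $i>k$, continuity of $y\mapsto|y-x|$ on the connected cube $Q_i$ gives $I_i=[A_i,B_i]$. The conditions $x^1\le x_k^1+\ell_k<x_{k+1}^1\le x_i^1$ and $x^j\in[0,\ell_k]\subseteq[0,\ell_i]$ for $j\ge 2$ ensure that the orthogonal projection $(x_i^1,x^2,\dots,x^d)$ of $x$ onto $Q_i$ lies in $Q_i$; hence $A_i=x_i^1-x^1$. Since $|\cdot-x|^2$ is convex, $B_i$ is attained at a vertex, and from $\{x\}\cup Q_i\subseteq[0,x_i^1+\ell_i]^d$ I get the crude bound $B_i\le\sqrt{d}\,(x_i^1+\ell_i)$.

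The quantitative core is $B_i\ll A_{i+1}$. The first bullet of the construction gives $x_i^1<\ell_i/(50d)$, so $B_i\le 2\sqrt{d}\,\ell_i=2\sqrt{d}\,\varepsilon R_{i+1}\le R_{i+1}/100$ using $\varepsilon=1/(10^5 d)$. Meanwhile, telescoping in the definition of $x_{i+1}^1$ yields
\[
A_{i+1}\ge x_{i+1}^1-(x_k^1+\ell_k)=\sum_{j=k+1}^{i}\ell_j+\sum_{j=k+1}^{i+1}R_j\ge R_{i+1},
\]
so $A_{i+1}-B_i\ge\tfrac{99}{100}R_{i+1}\to\infty$.

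To conclude, set $M_k:=\sqrt{d}\,(x_k^1+\ell_k)$, which upper-bounds every distance from $x$ to $\bigsqcup_{j\le k}Q_j$. Given $\ell>0$, choose $i>k$ so large that $B_i>\max(\ell,M_k)$, and take any $\ell'\in(B_i,A_{i+1})$. Then $\ell'>M_k$ excludes the near cubes, while for $j>k$ the chain $B_j\le B_i<\ell'$ for $j\le i$ (by monotonicity of $B$, a direct consequence of the gap inequality and $A_{j+1}\le B_{j+1}$) together with $A_j\ge A_{i+1}>\ell'$ for $j\ge i+1$ shows $\ell'\notin I_j$. The main work is the two-sided estimate for $I_i$ and matching the constants coming from $\varepsilon=1/(10^5 d)$; the rest is elementary bookkeeping.
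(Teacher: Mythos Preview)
Your proof is correct and follows essentially the same approach as the paper: bound $D_x(Q_i)$ by an interval $[A_i,B_i]$ for $i$ beyond the index of the cube containing $x$, verify $B_i\le 2\sqrt{d}\,\eps R_{i+1}\ll R_{i+1}\le A_{i+1}$ using $\eps=1/(10^5d)$, and pick $\ell'$ in the resulting gap. The only cosmetic differences are that you compute the exact minimum $A_i=x_i^1-x^1$ via orthogonal projection (the paper uses the cruder first-coordinate bound $x_i^1-(x_{i_0}^1+\ell_{i_0})$) and you use the cube-diameter bound $B_i\le\sqrt{d}(x_i^1+\ell_i)$ instead of the paper's diagonal $\sqrt{(x_m^1+\ell_m-x_{i_0}^1)^2+(d-1)\ell_m^2}$; both pairs of estimates reduce to the same numerics.
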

      \begin{proof}
       Indeed, it suffices to prove the pinned distance set
        \begin{equation*}
           D_x(E):= \{|x-y|:y\in E\}\subseteq \bigsqcup[a_i,b_i],
        \end{equation*}
        where $\lim_{i\to\infty} (a_i-b_{i-1})>0$ and $\lim a_i=\lim b_i=\infty$ (in fact, in this example, we can even prove that $\lim_{i\to\infty} (a_i-b_{i-1})=\infty$). If this is the case, for any $\ell>0$, choose a sufficiently large $\ell<a_I<b_I\ll a_{I+1}<b_{I+1}$. Then points between $b_I$ and $a_{I+1}$ can be chosen as $\ell'$.
        
        Assume $x\in Q_{i_0}$. 
        It suffices to consider sufficiently large distances {(a similar rigorous argument can be found in equation \eqref{2ineq})}. 
       Choose $M\gg i_0$ such that the following holds. 
       For $m\geq M$ and $y\in Q_m$, one has  
        \begin{equation}\label{est1}
           R_{m}<x^1_m-(x^1_{i_0}+\ell_{i_0}) \leq |x-y|\leq \sqrt{(x^1_m+\ell_m-x^1_{i_0})^2+(d-1)\ell_m^2}<2\eps \sqrt d R_{m+1}.
        \end{equation}
        In the last inequality we use the rapidly increasing property of $R_i$ and $m\gg i_0$. Intuitively, owing to our choice of rapidly increasing $R_i,$ the side length of $Q_m$, which is $\ell_m$, will dominate the sum of squares. Other inequalities come from basic geometric facts whose planar case is presented in Figure \ref{Geometric facts in equation 1}.
    \begin{figure}[h]
        \centering

\tikzset{every picture/.style={line width=0.75pt}} 

\begin{tikzpicture}[x=0.75pt,y=0.75pt,yscale=-1,xscale=1]

\draw    (86,230.25) -- (598,232.24) ;
\draw [shift={(600,232.25)}, rotate = 180.22] [color={rgb, 255:red, 0; green, 0; blue, 0 }  ][line width=0.75]    (10.93,-3.29) .. controls (6.95,-1.4) and (3.31,-0.3) .. (0,0) .. controls (3.31,0.3) and (6.95,1.4) .. (10.93,3.29)   ;
\draw    (86,230.25) -- (86,126.25) ;
\draw [shift={(86,124.25)}, rotate = 90] [color={rgb, 255:red, 0; green, 0; blue, 0 }  ][line width=0.75]    (10.93,-3.29) .. controls (6.95,-1.4) and (3.31,-0.3) .. (0,0) .. controls (3.31,0.3) and (6.95,1.4) .. (10.93,3.29)   ;
\draw  [color={rgb, 255:red, 208; green, 2; blue, 27 }  ,draw opacity=1 ] (149.88,194.17) -- (185.8,194.3) -- (185.68,230.23) -- (149.75,230.1) -- cycle ;
\draw  [color={rgb, 255:red, 208; green, 2; blue, 27 }  ,draw opacity=1 ] (495.31,126.83) -- (600.37,127.19) -- (600,232.25) -- (494.94,231.88) -- cycle ;
\draw  [dash pattern={on 4.5pt off 4.5pt}]  (176,206.06) -- (552,144.06) ;
\draw [shift={(552,144.06)}, rotate = 350.64] [color={rgb, 255:red, 0; green, 0; blue, 0 }  ][fill={rgb, 255:red, 0; green, 0; blue, 0 }  ][line width=0.75]      (0, 0) circle [x radius= 3.35, y radius= 3.35]   ;
\draw [shift={(176,206.06)}, rotate = 350.64] [color={rgb, 255:red, 0; green, 0; blue, 0 }  ][fill={rgb, 255:red, 0; green, 0; blue, 0 }  ][line width=0.75]      (0, 0) circle [x radius= 3.35, y radius= 3.35]   ;
\draw   (188,236.06) .. controls (187.99,240.73) and (190.31,243.07) .. (194.98,243.09) -- (330.48,243.54) .. controls (337.15,243.56) and (340.47,245.9) .. (340.45,250.57) .. controls (340.47,245.9) and (343.81,243.58) .. (350.48,243.6)(347.48,243.59) -- (485.98,244.05) .. controls (490.65,244.06) and (492.99,241.74) .. (493,237.07) ;
\draw  [color={rgb, 255:red, 208; green, 2; blue, 27 }  ,draw opacity=1 ] (330.88,174.17) -- (388.13,174.37) -- (387.93,231.63) -- (330.68,231.43) -- cycle ;
\draw   (490,220.06) .. controls (490,215.39) and (487.67,213.06) .. (483,213.06) -- (450,213.06) .. controls (443.33,213.06) and (440,210.73) .. (440,206.06) .. controls (440,210.73) and (436.67,213.06) .. (430,213.06)(433,213.06) -- (397,213.06) .. controls (392.33,213.06) and (390,215.39) .. (390,220.06) ;
\draw  [dash pattern={on 0.84pt off 2.51pt}]  (176,206.06) -- (176,231.06) ;
\draw  [dash pattern={on 0.84pt off 2.51pt}]  (552,144.06) -- (552,231.06) ;
\draw [line width=1.5]    (149.75,230.1) -- (600.37,127.19) ;
\draw    (258,206.06) .. controls (230.28,188.24) and (201.58,200.81) .. (209.74,154.48) ;
\draw [shift={(210,153.06)}, rotate = 100.62] [color={rgb, 255:red, 0; green, 0; blue, 0 }  ][line width=0.75]    (10.93,-3.29) .. controls (6.95,-1.4) and (3.31,-0.3) .. (0,0) .. controls (3.31,0.3) and (6.95,1.4) .. (10.93,3.29)   ;

\draw (166,237.4) node [anchor=north west][inner sep=0.75pt]    {$\ell _{i_{0}}$};
\draw (551,240.4) node [anchor=north west][inner sep=0.75pt]    {$\ell _{m}$};
\draw (130.75,234.5) node [anchor=north west][inner sep=0.75pt]    {$x_{i_{0}}^{1}$};
\draw (490,239.4) node [anchor=north west][inner sep=0.75pt]    {$x_{m}^{1}$};
\draw (453,117.65) node [anchor=north west][inner sep=0.75pt]    {$Q_{m}$};
\draw (115.68,186.63) node [anchor=north west][inner sep=0.75pt]    {$Q_{i_{0}}$};
\draw (167,177.4) node [anchor=north west][inner sep=0.75pt]    {$x$};
\draw (554,147.46) node [anchor=north west][inner sep=0.75pt]    {$y$};
\draw (263,165.4) node [anchor=north west][inner sep=0.75pt]  [color={rgb, 255:red, 74; green, 144; blue, 226 }  ,opacity=1 ]  {$|x-y|$};
\draw (285,266.4) node [anchor=north west][inner sep=0.75pt]  [color={rgb, 255:red, 74; green, 144; blue, 226 }  ,opacity=1 ]  {$x_{m}^{1} -\left( x_{i_{0}}^{1} +\ell _{i_0}\right)$};
\draw (344,191.65) node [anchor=north west][inner sep=0.75pt]    {$Q_{m-1}$};
\draw (424,185.4) node [anchor=north west][inner sep=0.75pt]  [color={rgb, 255:red, 74; green, 144; blue, 226 }  ,opacity=1 ]  {$R_{m}$};
\draw (146,114.4) node [anchor=north west][inner sep=0.75pt]  [color={rgb, 255:red, 74; green, 144; blue, 226 }  ,opacity=1 ]  {$\sqrt{\left( x_{m}^{1} +\ell _{m} -x_{i_{0}}^{1}\right)^{2} +( d-1) \ell _{m}^{2}} \ $};

\end{tikzpicture}

        \caption{Geometric facts in equation \eqref{est1}}
        \label{Geometric facts in equation 1}
    \end{figure}
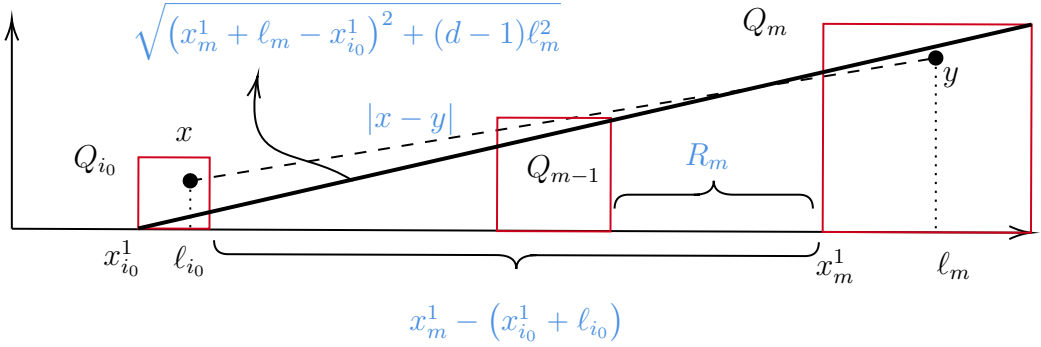
        
        Therefore, the distances between $x$ and points $y\in Q_m$ are contained in $[R_{m},2\eps \sqrt d R_{m+1}]$ when $m\geq M$. 
      Since $\eps=\frac{1}{10^5d}$, $2\eps \sqrt dR_{m+1}=\frac{2}{10^5\sqrt d}R_{m+1}\ll R_{m+1}$ and the gap tends to infinity as $m\to\infty$. Obviously, both $R_{m}$ and $2\eps \sqrt dR_{m+1}$ go to infinity. Therefore, the desired result follows. 
        \end{proof}
        
\end{example}
We conclude this section by proving equation \eqref{upperdensityforEx1}.
\begin{proof}[Proof of equation \eqref{upperdensityforEx1}]
    By the definition of limit superior,
    \begin{equation}\label{lowerbound}
        \delta(E)\geq \lim_{i\to \infty}\frac{|E\bigcap B(0,2\eps d R_i)|}{(2\eps d R_i)^d}.
    \end{equation}
    To estimate the numerator of the right hand side, we claim
    \begin{equation}\label{daxiaoguanxi}
        x^1_{i-1}+d\ell_{i-1}< 2\eps d R_i<x_{i}^1.
    \end{equation}
    Indeed, by our definitions for the sequences, when $\eps\ll 1,$
    \begin{align*}
           x_{i-1}^1+d\ell_{i-1}=2+(1+\eps)\sum_{2\leq j\leq i-1}R_j+\eps dR_i&<\frac{\eps}{50d}\frac{100d}{\eps}\sum_{1\leq j\leq  i-1}R_j+\eps dR_i<\frac{\eps}{50d}R_i+\eps dR_i<2\eps dR_i,\\
           x_i^1=2+(1+\eps)\sum_{2\leq j\leq i}R_j&>R_i>2\eps dR_i.
    \end{align*}
    Comparing the quantities with the dimensions of boxes and gaps, we find equation \eqref{daxiaoguanxi} implies 
    \begin{equation*}
        E\bigcap B(0,2\eps dR_i)=\bigsqcup_{j=1}^{i-1}Q_j.
    \end{equation*}
    Indeed, for all $x\in Q_{i-1},$ $|x|\leq  x^1_{i-1}+d\ell_{i-1}$. Therefore, $Q_{i-1}$ and all boxes smaller than it, are contained in the ball $B(0,2\eps d R_i).$ For all $y\in B(0,2\eps dR_i),$ $|y|\leq 2\eps dR_i<x_i^1=\text{dist}(Q_i,0)$ which implies $B(0,2\eps dR_i)\cap Q_i=\emptyset.$
    
    Consequently,  by the definition of $E$,
     \begin{equation*}
      \text{RHS of \eqref{lowerbound}}=  \lim_{i\to \infty}\frac{|E\bigcap B(0,2\eps dR_i)|}{(2\eps dR_i)^d}= \lim_{i\to\infty} \frac{\sum_{j<i} |Q_j|}{(2\eps dR_i)^d}.
    \end{equation*}   
Then by the definition of $Q_{i-1}$, we have
    \begin{equation*}
        \lim_{i\to\infty} \frac{\sum_{j<i} |Q_j|}{(2\eps dR_i)^d}>\lim_{i\to\infty} \frac{|Q_{i-1}|}{(2\eps dR_i)^d}=\lim_{i\to\infty} \frac{(\eps R_i)^d}{(2\eps dR_i)^d}=(2d)^{-d}.
    \end{equation*}
    This means $\delta(E)\geq C_d$, while trivially, by its definition, $\delta(E)\lesssim_d  1$, which implies equation \eqref{upperdensityforEx1}.
\end{proof}

\section{Positive results for 2-point pattern}\label{sec3PositiveResult}
Although the above example shows that the Bourgain type result  cannot hold, one has the weaker positive conclusion, Theorem \ref{prop1}.
The theorem is derived from the following two lemmas. Lemma \ref{translationinvariance} is the reason why we can obtain ``uniformity" over all $x$ and its proof depends heavily on the definition of upper density. Lemma \ref{0caseforprop1} is the ``pinned at the origin" case which is equivalent to Theorem \ref{prop1} due to the uniformity.
\begin{lemma}\label{0caseforprop1}
   Assume $d\geq 2$ and $A\subseteq \R^d$ with $\delta(A)>0$. Then $\delta(D_0(A))\geq \frac{\delta(A)}{2|\mathbb S^{d-1}|}$.
\end{lemma}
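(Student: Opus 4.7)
The plan is to compare $|A \cap B(0, R)|$ with $|D_0(A) \cap [0, R]|$ through a polar-coordinate decomposition. The point is that $r \in D_0(A)$ precisely encodes that the sphere of radius $r$ about the origin meets $A$, so any contribution to $|A \cap B(0, R)|$ must come from radii lying in $D_0(A)$.

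Concretely, I would write $y = r\omega$ with $r = |y|$ and $\omega \in \mathbb{S}^{d-1}$, giving
$$|A \cap B(0, R)| = \int_0^R r^{d-1} \sigma(A_r)\, dr, \qquad A_r := \{\omega \in \mathbb{S}^{d-1} : r\omega \in A\},$$
where $\sigma$ denotes the surface measure on $\mathbb{S}^{d-1}$. The integrand is bounded above by $|\mathbb{S}^{d-1}|$ and vanishes for every $r \notin D_0(A)$, whence
$$|A \cap B(0, R)| \;\leq\; |\mathbb{S}^{d-1}| \int_{D_0(A) \cap [0, R]} r^{d-1}\, dr \;\leq\; |\mathbb{S}^{d-1}|\, R^{d-1}\, |D_0(A) \cap [0, R]|.$$
Dividing both sides by $R^d$ and using that $D_0(A) \subseteq [0, \infty)$, so $|D_0(A) \cap [0, R]| = |D_0(A) \cap (-R, R)|$, produces
$$\frac{|A \cap B(0, R)|}{R^d} \;\leq\; |\mathbb{S}^{d-1}|\cdot \frac{|D_0(A) \cap (-R, R)|}{R}.$$
Taking $\limsup$ as $R \to \infty$ along a sequence attaining $\delta(A)$ then yields $\delta(A) \leq |\mathbb{S}^{d-1}|\, \delta(D_0(A))$, which is already stronger than the stated bound $\delta(D_0(A)) \geq \delta(A)/(2|\mathbb{S}^{d-1}|)$; the extra factor of two presumably reflects slack useful in the translation argument reducing general $x$ to the case $x = 0$ via Lemma \ref{translationinvariance}.

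The one step I expect to require some care is measurability of $D_0(A)$: for a generic Lebesgue measurable $A \subseteq \R^d$, the distance image $\{|y| : y \in A\}$ need not itself be Borel. I would handle this either by reading $|\cdot|$ on $\R$ as outer Lebesgue measure (in which case the upper bound above is unaffected), or by first replacing $A$ with an $F_\sigma$ subset of the same Lebesgue measure, so that $D_0(A)$ becomes an analytic set and hence universally measurable. Either adjustment preserves $\delta(A)$ and leaves the chain of inequalities intact, so I view this as a formality rather than a genuine obstacle.
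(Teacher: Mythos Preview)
Your proposal is correct and follows essentially the same spherical-integration argument as the paper: write $|A\cap B(0,R)|$ in polar coordinates, observe that the radial integrand vanishes off $D_0(A)$ and is bounded by $|\mathbb{S}^{d-1}|r^{d-1}$, and pass to the $\limsup$. The only cosmetic difference is that the paper first fixes a sequence $R_i$ with $|A\cap B(0,R_i)|\geq \tfrac{\delta(A)}{2}R_i^d$ (which is where the factor $2$ enters), whereas you keep the inequality for all $R$ and obtain the sharper constant $\delta(D_0(A))\geq \delta(A)/|\mathbb{S}^{d-1}|$; your measurability remark is a point the paper leaves implicit.
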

\begin{lemma}\label{translationinvariance}
    For all $A\subseteq \R^d$ and $x\in \R^d$, $\delta(A-x)=\delta(A).$
\end{lemma}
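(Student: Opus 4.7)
The plan is to reduce the claim to comparing the two volumes $|B(0,R)\cap(A-x)|$ and $|B(0,R)\cap A|$ directly, by a change of variables and a standard shell-volume estimate.

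First I would observe that the translation $y\mapsto y+x$ is a measure-preserving bijection sending $B(0,R)\cap(A-x)$ onto $B(x,R)\cap A$. Hence
\begin{equation*}
|B(0,R)\cap(A-x)|=|B(x,R)\cap A|.
\end{equation*}
So the task becomes showing that
\begin{equation*}
\limsup_{R\to\infty}\frac{|B(x,R)\cap A|}{R^{d}}=\limsup_{R\to\infty}\frac{|B(0,R)\cap A|}{R^{d}}.
\end{equation*}

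Next, for $R>|x|$, the triangle inequality gives $B(x,R)\subseteq B(0,R+|x|)$ and $B(0,R-|x|)\subseteq B(x,R)$ (and symmetrically with the roles of $0$ and $x$ swapped). Therefore $B(0,R)\triangle B(x,R)\subseteq B(0,R+|x|)\setminus B(0,R-|x|)$, whose Lebesgue measure equals $c_{d}\bigl((R+|x|)^{d}-(R-|x|)^{d}\bigr)=O_{x}(R^{d-1})$ by the mean value theorem. Hence
\begin{equation*}
\bigl||B(x,R)\cap A|-|B(0,R)\cap A|\bigr|\le |B(0,R)\triangle B(x,R)|=O_{x}(R^{d-1}),
\end{equation*}
and dividing by $R^{d}$ the difference tends to $0$.

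Finally, since $f(R)-g(R)\to 0$ implies $\limsup f=\limsup g$ for bounded nonnegative sequences (both limsups are finite here because the numerators are bounded by $c_{d}R^{d}$), the two limsups coincide, giving $\delta(A-x)=\delta(A)$. I do not anticipate a real obstacle: the argument is entirely routine shell-volume accounting, and the only mild care needed is to avoid confusing $B(x,R)\cap A$ with $(B(x,R)\cap A)-x$ in the change of variables.
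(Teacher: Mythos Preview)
Your proof is correct and follows essentially the same approach as the paper: both reduce to comparing $|B(0,R)\cap A|$ with $|B(x,R)\cap A|$ via translation invariance and then bound the discrepancy by a shell-volume estimate of order $O_{x}(R^{d-1})$. The only cosmetic difference is that you handle both inequalities at once via the symmetric difference $B(0,R)\triangle B(x,R)$, whereas the paper bounds one set difference $B(-x,R)\setminus B(0,R)$ and then invokes symmetry.
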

\begin{proof}[Proof of Theorem \ref{prop1} assuming Lemma \ref{0caseforprop1} and \ref{translationinvariance}] For all $x\in \R^d$, by Lemma \ref{translationinvariance}, $\delta(A-x)=\delta(A)$. 
Then by applying Lemma \ref{0caseforprop1} to $(A-x)$ and translation invariance, $\delta(D_x(A))=\delta(D_0(A-x))\geq \frac{\delta(A-x)}{2|\mathbb S^{d-1}|}=\frac{\delta(A)}{2|\mathbb S^{d-1}|}.$
\end{proof}
We first prove Lemma \ref{translationinvariance}.
\begin{proof}[Proof of Lemma \ref{translationinvariance}]By symmetry, it suffices to prove for all $A\subseteq \R^d$ and any fixed $x\in \R^d,$
\begin{equation}\label{onesideestimate}
    \limsup_{R\to\infty}\frac{|(A-x)\bigcap B(0,R)|}{R^d}\leq \limsup_{R\to\infty}\frac{|A\bigcap B(0,R)|}{R^d}.
\end{equation}
Indeed, applying $x=-x$ in equation \eqref{onesideestimate}, we obtain
\begin{equation*}
    \limsup_{R\to\infty}\frac{|(A+x)\bigcap B(0,R)|}{R^d}\leq \limsup_{R\to\infty}\frac{|A\bigcap B(0,R)|}{R^d}.
\end{equation*}
Replace $A$ by $(A-x)$, and note that $(A-x)+x=A$,  
\begin{equation*}
    \limsup_{R\to\infty}\frac{|A\bigcap B(0,R)|}{R^d}\leq \limsup_{R\to\infty}\frac{|(A-x)\bigcap B(0,R)|}{R^d}.
\end{equation*}
Combining this with equation \eqref{onesideestimate}, we obtain the desired equality. To prove equation \eqref{onesideestimate}, by translation invariance and additivity of Lebesgue measure,
\begin{align*}
    \frac{|A\bigcap B(0,R)|}{R^d}&= \frac{|(A-x)\bigcap B(-x,R)|}{R^d}\\
    &=\frac{|(A-x)\bigcap B(-x,R)\bigcap B(0,R)|}{R^d}+\frac{|(A-x)\bigcap B(-x,R)\setminus B(0,R)|}{R^d}\\
    &\leq \frac{|(A-x)\bigcap B(0,R)|}{R^d}+ \frac{|B(-x,R)\setminus B(0,R) |}{R^d}\\
    &\to \frac{|(A-x)\bigcap B(0,R)|}{R^d}, R\to \infty. 
\end{align*} 
    In the limit argument of last line, we use the fact that the second term $\frac{|B(-x,R)\setminus B(0,R) |}{R^d}$ of the last inequality tends to $0$ as $R\to\infty$. In fact, by a basic geometric property of balls (see Figure \ref{xianrannnnn}), it can be easily checked that $B(-x,R)\setminus B(0,R) \subseteq B(0,R+|x|)\setminus B(0,R)$ while $|B(0,R+|x|)\setminus B(0,R)|=C_d|x|R^{d-1}$ for some constant $C_d$. So 
    \begin{equation*}
      \frac{|B(-x,R)\setminus B(0,R)|}{R^d}\leq \frac{C_d|x|R^{d-1}}{R^d}=\frac{C_d|x|}{R}\to 0. \qedhere
    \end{equation*}
    \begin{figure}[h]
        \centering
        
\tikzset{every picture/.style={line width=0.75pt}} 

\begin{tikzpicture}[x=0.75pt,y=0.75pt,yscale=-1,xscale=1]

\draw [color={rgb, 255:red, 208; green, 2; blue, 27 }  ,draw opacity=1 ]   (176.15,203.85) ;
\draw [shift={(176.15,203.85)}, rotate = 0] [color={rgb, 255:red, 208; green, 2; blue, 27 }  ,draw opacity=1 ][fill={rgb, 255:red, 208; green, 2; blue, 27 }  ,fill opacity=1 ][line width=0.75]      (0, 0) circle [x radius= 2.34, y radius= 2.34]   ;
\draw  [line width=1.5]  (98.31,203.85) .. controls (98.31,147.05) and (144.35,101) .. (201.15,101) .. controls (257.95,101) and (304,147.05) .. (304,203.85) .. controls (304,260.65) and (257.95,306.69) .. (201.15,306.69) .. controls (144.35,306.69) and (98.31,260.65) .. (98.31,203.85) -- cycle ;
\draw    (201.15,203.85) ;
\draw [shift={(201.15,203.85)}, rotate = 0] [color={rgb, 255:red, 0; green, 0; blue, 0 }  ][fill={rgb, 255:red, 0; green, 0; blue, 0 }  ][line width=0.75]      (0, 0) circle [x radius= 2.34, y radius= 2.34]   ;
\draw [line width=1.5]  [dash pattern={on 1.69pt off 2.76pt}]  (177.2,203.85) -- (304,203.85) ;
\draw   (275.38,197.3) .. controls (275.38,192.63) and (273.05,190.3) .. (268.38,190.3) -- (236.88,190.3) .. controls (230.21,190.3) and (226.88,187.97) .. (226.88,183.3) .. controls (226.88,187.97) and (223.55,190.3) .. (216.88,190.3)(219.88,190.3) -- (185.38,190.3) .. controls (180.71,190.3) and (178.38,192.63) .. (178.38,197.3) ;
\draw    (259.82,120.14) -- (271.82,163.14) ;
\draw    (279,203.85) -- (291,246.85) ;
\draw    (279.82,140.14) -- (291.82,183.14) ;
\draw    (271.82,242.14) -- (278.82,267.14) ;
\draw    (259.82,261.14) -- (265.82,280.14) ;
\draw    (237.82,110.14) -- (241.82,123.14) ;
\draw    (295.82,207.14) -- (300.82,225.14) ;
\draw    (242.82,283.14) -- (246.82,294.14) ;
\draw    (294,225) .. controls (322.04,219.08) and (321.49,212.2) .. (347.27,245.44) ;
\draw [shift={(348.47,246.99)}, rotate = 232.35] [color={rgb, 255:red, 0; green, 0; blue, 0 }  ][line width=0.75]    (10.93,-3.29) .. controls (6.95,-1.4) and (3.31,-0.3) .. (0,0) .. controls (3.31,0.3) and (6.95,1.4) .. (10.93,3.29)   ;
\draw  [color={rgb, 255:red, 208; green, 2; blue, 27 }  ,draw opacity=1 ][fill={rgb, 255:red, 255; green, 255; blue, 255 }  ,fill opacity=0 ,even odd rule][line width=1.5]  (74.67,204.72) .. controls (74.18,147.7) and (119.69,101.08) .. (176.32,100.59) .. controls (232.94,100.11) and (279.24,145.94) .. (279.73,202.97) .. controls (280.22,260) and (234.71,306.62) .. (178.08,307.1) .. controls (121.46,307.59) and (75.16,261.75) .. (74.67,204.72)(48.45,204.95) .. controls (47.84,133.44) and (104.99,74.98) .. (176.09,74.37) .. controls (247.2,73.77) and (305.34,131.24) .. (305.95,202.75) .. controls (306.56,274.25) and (249.41,332.71) .. (178.31,333.32) .. controls (107.2,333.93) and (49.07,276.45) .. (48.45,204.95) ;
\draw    (231,95) .. controls (280.71,94) and (315.42,94.96) .. (344.16,115.06) ;
\draw [shift={(345.47,115.99)}, rotate = 215.91] [color={rgb, 255:red, 0; green, 0; blue, 0 }  ][line width=0.75]    (10.93,-3.29) .. controls (6.95,-1.4) and (3.31,-0.3) .. (0,0) .. controls (3.31,0.3) and (6.95,1.4) .. (10.93,3.29)   ;

\draw (160,206.4) node [anchor=north west][inner sep=0.75pt]    {$O$};
\draw (203.15,207.25) node [anchor=north west][inner sep=0.75pt]    {$-x$};
\draw (282,185.4) node [anchor=north west][inner sep=0.75pt]    {$|x|$};
\draw (222,164.4) node [anchor=north west][inner sep=0.75pt]    {$R$};
\draw (13,128.73) node [anchor=north west][inner sep=0.75pt]  [color={rgb, 255:red, 0; green, 0; blue, 0 }  ,opacity=1 ]  {$B( 0,|x|+R)$};
\draw (315,255.4) node [anchor=north west][inner sep=0.75pt]    {$B( -x,R) \setminus B( 0,R)$};
\draw (301,123.4) node [anchor=north west][inner sep=0.75pt]  [font=\footnotesize]  {$B( 0,|x|+R) \setminus B( 0,R)$};
\draw (46.33,188.07) node [anchor=north west][inner sep=0.75pt]  [color={rgb, 255:red, 0; green, 0; blue, 0 }  ,opacity=1 ]  {$B( 0, R)$};
\draw (91.67,242.07) node [anchor=north west][inner sep=0.75pt]    {$B( -x,R)$};

\end{tikzpicture}

        \caption{Basic geometric fact}
        \label{xianrannnnn}
    \end{figure}
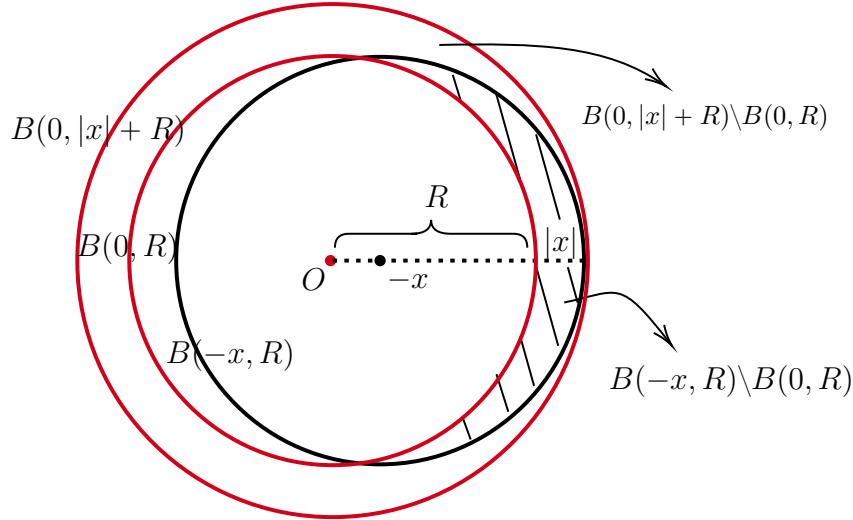
\end{proof}
\begin{remark}\label{basicproperty}
    By a similar argument involving only definition, we can show that the upper density has sub-additivity and monotonicity. For any $A,B$
    \begin{equation}\label{submon}
        \delta(A\cup B)\leq \delta(A)+\delta(B),\quad \delta(A)\leq \delta(A\cup B).
    \end{equation}
\end{remark}
We now start to prove Lemma \ref{0caseforprop1} via a spherical integration argument.
\begin{proof}[Proof of Lemma \ref{0caseforprop1}]
   By the definition of limit superior, there is $R_i\to\infty,i\to \infty$ such that 
    \begin{equation}\label{upseq}
        \left|A\bigcap B(0,R_i)\right|\geq \frac{\delta(A)}{2}R_i^d.
    \end{equation}
    When $i$ is sufficiently large, by spherical coordinates, equation \eqref{upseq} implies that  
    \begin{align*}
        \frac{\delta(A)}{2}R_i^d&\leq \int_{B(0,R_i)}\chi_A(y)dy\\
       &=\int_{0}^{R_i}\int_{\mathbb S^{d-1}}\chi_A(ry')r^{d-1}d\sigma(y')dr,
    \end{align*}
    where $\chi_A$ is the characteristic function of set $A$ and $d\sigma$ is the area measure on unit sphere $\mathbb S^{d-1}$. Let $A_0^i$ denote all possible distances smaller than $R_i$, between $0$ and  points in $A$, i.e.
    \begin{equation}\label{Def:A_o^i}
         \begin{aligned}
        A^i_0:=&\{r\in [0,R_i]: r\mathbb S^{d-1}\bigcap A\neq \emptyset\}\\
        =&\{r\in [0,R_i]: \exists y\in A \text{ such that } |y-0|=|y|=r\}\\
        =&D_0(A)\bigcap [0,R_i].
    \end{aligned}
    \end{equation}
    Then   \begin{equation}\label{proofxs}
           \begin{aligned}
         \frac{\delta(A)}{2}R_i^d&\leq  \int_{0}^{R_i}\int_{\mathbb S^{d-1}}\chi_A(ry')r^{d-1}d\sigma(y')dr\\
        &=\int_{A^i_0}\int_{\mathbb S^{d-1}}\chi_A(ry')r^{d-1}d\sigma(y')dr\\
        &\leq |\mathbb S^{d-1}|\int_{A^i_0} r^{d-1}dr\leq |\mathbb S^{d-1}||A^i_0| R_i^{d-1},
    \end{aligned}
    \end{equation}
  where $|A^i_0|$ is the one-dimensional Lebesgue measure of $A^i_0$. This implies 
    \begin{equation*}
        \frac{\delta(A)}{2}R_i\leq |\mathbb S^{d-1}||A^i_0|.
    \end{equation*}
 Recalling the definition of $A_0^i$ in \eqref{Def:A_o^i}, we obtain that
 \begin{equation*}
     \delta(D_0(A))\geq \lim_{i\to \infty} \frac{|D_0(A)\bigcap[0,R_i]|}{R_i}=\lim_{i\to\infty}\frac{|A_0^i|}{R_i}\geq \frac{\delta(A)}{2|\mathbb S^{d-1}|}. \qedhere
 \end{equation*}
\end{proof}

\section{Sharpness of Theorem \ref{prop1}} \label{sec4Sharpness}
Theorem \ref{prop1} is sharp up to a constant depending on $d$. Recall Proposition \ref{Prop:Sharpness}.
Its proof is the following Example \ref{sharpexample}. In what follows, $C_d$ may change from line to line.
\begin{example}[thin annulus]\label{sharpexample}
    Fix $\eps_0\ll 1$ and let $R_i$ be the rapidly increasing sequence in Example \ref{NoBourgainTypeResult} with $\eps$ replaced with $\eps_0$. Define 
    \begin{equation*}
        I:=\bigsqcup_{i\geq 100} [R_i,(1+\eps_0)R_i]=:\bigsqcup_{i\geq 100} I_i,~E:=\bigsqcup_{i\geq100}\{r\mathbb S^{d-1}:r\in I_i\}=:\bigsqcup_{i\geq100}S_i.
    \end{equation*}
    Each $S_i$ is an annulus with volume $|S_i|=c_dR_i^d[(1+\eps_0)^d-1]$. Similar to what we discussed in Example \ref{NoBourgainTypeResult}, due to the rapidly increasing property of $R_i$, the volume of the largest annulus will dominate, i.e. $|\bigsqcup_{j<i} S_j|\leq \frac{1}{10}|S_i|$. Indeed, 
    \begin{equation*}
        \big|\bigsqcup_{j<i} S_j\big|=c_d[(1+\eps_0)^d-1]\sum_{j<i}R_j^d\leq \frac{1}{10}c_d[(1+\eps_0)^d-1]\left(\frac{100d}{\eps_0}\sum_{j<i}R_j\right)^d<\frac{1}{10}|S_i|.
    \end{equation*}
     In the second inequality, we use the fact that $\frac{1}{10}\times \left(\frac{100d}{\eps_0}\right)^d\geq 1$, as long as $\eps_0$ is sufficiently small. Therefore, it can be deduced that 
    \begin{equation}\label{deltaE}
        \delta(E)\geq \lim_{i\to\infty}\frac{|S_i|}{((1+\eps_0)R_i)^d}=\lim_{i\to\infty}\frac{c_dR_i^d[(1+\eps_0)^d-1]}{((1+\eps_0)R_i)^d}=C_d\eps_0.
    \end{equation}
    Now we analyze the upper density of the pinned distance set and our goal is to prove $\delta(D_x(A))=O_d(\eps_0)$. For each fixed $x\in E$, take $M$ sufficiently large, such that for all $i\geq M$ 
    \begin{equation*}
        D_x(S_i)\subseteq [R_i-|x|,(1+\eps_0)R_i+|x|]\subseteq [(1-\frac{\eps_0}{2})R_i,(1+\frac{3\eps_0}{2})R_i]=:\tilde{I_i}.
    \end{equation*}
    See Figure \ref{planarshells} for the planar case. This roughly says that the scale we are working with is too large so that we can treat $x$ as 0.
    \begin{figure}[h]
        \centering

\tikzset{every picture/.style={line width=0.75pt}} 

\begin{tikzpicture}[x=0.75pt,y=0.75pt,yscale=-1,xscale=1]

\draw   (148.67,254.1) .. controls (148.67,190.35) and (201.76,138.67) .. (267.25,138.67) .. controls (332.75,138.67) and (385.84,190.35) .. (385.84,254.1) .. controls (385.84,317.85) and (332.75,369.53) .. (267.25,369.53) .. controls (201.76,369.53) and (148.67,317.85) .. (148.67,254.1) -- cycle ;
\draw   (164.28,254.1) .. controls (164.28,198.74) and (210.38,153.86) .. (267.25,153.86) .. controls (324.13,153.86) and (370.23,198.74) .. (370.23,254.1) .. controls (370.23,309.46) and (324.13,354.34) .. (267.25,354.34) .. controls (210.38,354.34) and (164.28,309.46) .. (164.28,254.1) -- cycle ;
\draw    (277.92,246.77) ;
\draw [shift={(277.92,246.77)}, rotate = 0] [color={rgb, 255:red, 0; green, 0; blue, 0 }  ][fill={rgb, 255:red, 0; green, 0; blue, 0 }  ][line width=0.75]      (0, 0) circle [x radius= 1.34, y radius= 1.34]   ;
\draw    (267.25,254.1) ;
\draw [shift={(267.25,254.1)}, rotate = 0] [color={rgb, 255:red, 0; green, 0; blue, 0 }  ][fill={rgb, 255:red, 0; green, 0; blue, 0 }  ][line width=0.75]      (0, 0) circle [x radius= 1.34, y radius= 1.34]   ;
\draw    (172.45,323.56) -- (362.06,184.64) ;
\draw   (345.25,191.77) .. controls (342.46,188.03) and (339.19,187.56) .. (335.45,190.35) -- (315.46,205.29) .. controls (310.12,209.28) and (306.05,209.4) .. (303.26,205.67) .. controls (306.05,209.4) and (304.78,213.27) .. (299.44,217.26)(301.84,215.46) -- (279.45,232.2) .. controls (275.71,234.99) and (275.24,238.26) .. (278.03,242) ;
\draw   (181.47,329.36) .. controls (184.25,333.11) and (187.51,333.59) .. (191.26,330.81) -- (223.13,307.2) .. controls (228.49,303.23) and (232.56,303.12) .. (235.33,306.87) .. controls (232.56,303.12) and (233.85,299.26) .. (239.2,295.29)(236.79,297.07) -- (271.07,271.67) .. controls (274.82,268.89) and (275.31,265.63) .. (272.53,261.88) ;
\draw   (265.74,253.95) .. controls (262.97,250.18) and (259.71,249.68) .. (255.95,252.45) -- (229.56,271.82) .. controls (224.19,275.77) and (220.12,275.86) .. (217.35,272.09) .. controls (220.12,275.86) and (218.81,279.71) .. (213.44,283.65)(215.85,281.88) -- (187.04,303.03) .. controls (183.28,305.79) and (182.78,309.05) .. (185.54,312.81) ;

\draw (241.23,374.92) node [anchor=north west][inner sep=0.75pt]  [font=\footnotesize]  {$S_{i} ,\ i\geq M$};
\draw (255.88,261.28) node [anchor=north west][inner sep=0.75pt]  [font=\footnotesize]  {$O$};
\draw (282.14,246.28) node [anchor=north west][inner sep=0.75pt]    {$x$};
\draw (221.54,312.02) node [anchor=north west][inner sep=0.75pt]  [font=\footnotesize,color={rgb, 255:red, 74; green, 144; blue, 226 }  ,opacity=1 ]  {$( 1+\varepsilon _{0}) R_{i}$};
\draw (203.03,259.72) node [anchor=north west][inner sep=0.75pt]  [font=\footnotesize,color={rgb, 255:red, 74; green, 144; blue, 226 }  ,opacity=1 ]  {$R_{i}$};
\draw (253.7,232.16) node [anchor=north west][inner sep=0.75pt]  [font=\footnotesize,color={rgb, 255:red, 74; green, 144; blue, 226 }  ,opacity=1 ]  {$|x|$};
\draw (258.78,187.59) node [anchor=north west][inner sep=0.75pt]  [font=\footnotesize,color={rgb, 255:red, 74; green, 144; blue, 226 }  ,opacity=1 ]  {$R_{i} -|x|$};

\end{tikzpicture}

        \caption{Annulus in $R^2$}
        \label{planarshells}
    \end{figure}
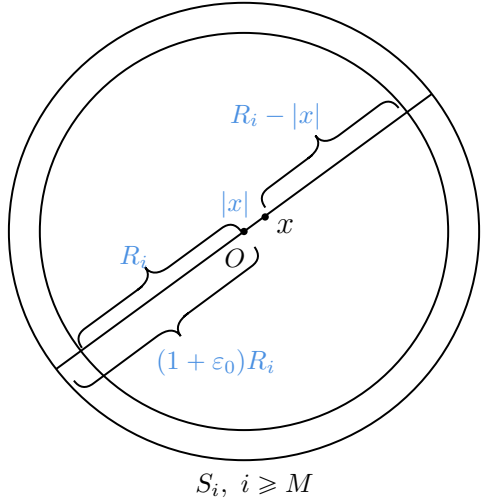  

    Therefore, we have 
    \begin{equation*}
        D_x(E)\subseteq B(0,R_M)\bigcup \bigsqcup_{i\geq M}\tilde{I_i}.
    \end{equation*}
    Similar to the analysis of the volumes of the annuli, one can show that the contribution from the largest $\tilde I_i$ dominates the total, rendering the contributions from smaller intervals negligible. 
    As a consequence, 
      \begin{equation}\label{2ineq}
        \delta(D_x(E))\leq \delta\Big(\bigsqcup_{i\geq M}\tilde{I_i}\Big)\leq C_d\eps_0.
    \end{equation}
    Combining this with \eqref{deltaE}, one can see there is a constant $C_d$ such that $\delta(D_x(E))\leq C_d\delta(E).$
    
    In the first inequality of equation \eqref{2ineq}, we use the monotonicity \eqref{submon} and the fact that $\delta(B(0,R_M))=0$ since $M$ is a fixed number. Now we prove the second inequality of it.

    Assume the sequence $\{\bar R_n\}$ satisfies  
    \begin{equation*}
        \delta\Big(\bigsqcup_{i\geq M}\tilde{I_i}\Big)=\lim_{n\to \infty}\frac{|\bigsqcup_{i\geq M}\tilde{I_i}\bigcap B(0,\bar R_n)|}{\bar R_n}.
    \end{equation*}
    For each sufficiently large $\bar R_n\geq R_{2M}$, by pigeonholing, there is $i \geq M$ such that $(1+\frac{3\eps_0}{2})R_i<\bar R_n\leq (1+\frac{3\eps_0}{2})R_{i+1}$. There are two cases for the exact location of $\bar R_n$:
    \begin{enumerate}
        \item $\bar R_n\in [(1-\frac{\eps_0}{2})R_{i+1},(1+\frac{3\eps_0}{2})R_{i+1}]=\tilde I_{i+1}$,
        \item $\bar R_n\not\in \tilde I_{i+1}$.
    \end{enumerate}
    If we are in the first case, since the longest interval dominates, this implies
    \begin{equation*}
        \frac{|\bigsqcup_{i\geq M}\tilde{I_i}\bigcap B(0,\bar R_n)|}{\bar R_n}\leq C_d \frac{|\tilde I_{i+1}|}{(1-\frac{\eps_0}{2})R_{i+1}}\leq C_d\frac{2\eps_0}{(1-\frac{\eps_0}{2})}.
    \end{equation*}
    If we are in the second case, which means that 
    \begin{equation*}
        (1+\frac{3\eps_0}{2})R_i<\bar R_n<(1-\frac{\eps_0}{2})R_{i+1},
    \end{equation*}
    then 
    \begin{equation*}
        \frac{|\bigsqcup_{i\geq M}\tilde{I_i}\bigcap B(0,\bar R_n)|}{\bar R_n}\leq C_d \frac{|\tilde I_{i}|}{(1+\frac{3\eps_0}{2})R_{i}}\leq C_d\frac{2\eps_0}{(1+\frac{3\eps_0}{2})}.
    \end{equation*}
    In both cases, the second inequality of equation \eqref{2ineq} holds if $C_d$ is appropriately chosen.
\end{example}
\begin{remark}
    Intuitively, to make the upper density of the pinned distance set as small as possible, each distance $r$ should be very ``efficient", i.e. $r\mathbb S^{d-1}$ should contain points from $E$ as many as possible. The extreme case is the entire $r\mathbb S^{d-1}\subseteq E$ which is the example. This can also be seen from the proof of Proposition \ref{prop1}: the last inequality of equation \eqref{proofxs} is an equality when $\chi_A(ry')\equiv 1$ for $y'\in \mathbb S^{d-1}$ which means $r\mathbb S^{d-1}\subseteq A$. 
\end{remark}

\bibliographystyle{plain}
\bibliography{ref.bib}

\begin{thebibliography}{1}

\bibitem{Bourgain1986}
J.~Bourgain.
\newblock A {S}zemer\'edi type theorem for sets of positive density in {$\mathbb R^k$}.
\newblock {\em Israel J. Math.}, 54:307--316, 1986.

\bibitem{CMP}
B.~Cook, A.~Magyar, and M.~Pramanik.
\newblock A {R}oth-type theorem for dense subsets of {$\Bbb R^d$}.
\newblock {\em Bull. Lond. Math. Soc.}, 49(4):676--689, 2017.

\bibitem{GIOW5/4}
L.~Guth, A.~Iosevich, Y.~Ou, and H.~Wang.
\newblock On {F}alconer's distance set problem in the plane.
\newblock {\em Invent. Math.}, 219(3):779--830, 2020.

\bibitem{LiuFormular}
B.~Liu.
\newblock An {$L^2$}-identity and pinned distance problem.
\newblock {\em Geom. Funct. Anal.}, 29(1):283--294, 2019.

\bibitem{lyallmagyar}
N.~Lyall and A.~Magyar.
\newblock A new proof of {B}ourgain's theorem on simplices in $\mathbb{R}^d$.

\end{thebibliography}

\vspace{1em}
\noindent \textsc{Department of Mathematics, 1984 Mathematics Road, The University of British Columbia Vancouver, BC Canada V6T 1Z2}. \\
\textit{Email address}: \texttt{chjwang@math.ubc.ca}

\end{document}